\newtheorem {theorem}    {Theorem}[section]
\newtheorem {lemma}      [theorem]    {Lemma}
\theoremstyle{definition}
\newcounter{AbcT}
\numberwithin{equation}{section}
\newcommand{\A}{\mathcal A}
\newcommand{\B}{\mathcal B}
\newcommand {\R} {{\mathbb R}}
\newcommand {\Z} {{\mathbb Z}}
\newcommand{\IGNORE}[1]{}
\newcommand{\inv}{^{-1}}
\renewcommand{\limsup}{\varlimsup}
 \DeclareMathOperator{\SL}{SL}
\newcommand{\qp}{\mathbb{Q}_p}
\newcommand{\bbF}{\mathbb{F}}
\newcommand{\id}{\mathrm{id}}
\begin{document}
\title[Field Extensions]{Buildings, Extensions, and Volume Growth Entropy}

\begin{abstract}
Let $F$ be a non-Archimedean local field and let $E$ be a finite extension of $F$. Let $G$ be a split semisimple $F$ group. We discuss how to compare volumes on the Bruhat-Tits buildings $\B_E$ and $\B_F$ of $G(E)$ and $G(F)$ respectively.  

\end{abstract}

\author{J. S. Athreya}
\address{Department of Mathematics, University of Illinois.
1409 W. Green Street, Urbana, IL 61801.}
\author{Anish Ghosh}
\address{School of Mathematics, University of East Anglia, Norwich, NR4 7TJ UK}
\author{Amritanshu Prasad}
\address{The Institute of Mathematical Sciences, Taramani, Chennai 600 113, India}
\thanks{Ghosh is partly supported by a Royal Society Grant.}
    \thanks{J.S.A. partially supported by NSF grant DMS 1069153}

\maketitle
\tableofcontents

\section{Introduction}
Let $F$ be a non-Archimedean local field, so $F$ is a finite extension of the $p$-adic numbers $\qp$ or of $\bbF_{p}((t))$, the field of Laurent series over a finite field of $p$ elements. Let $E$ be an extension of $F$ of degree $n$. 
Let $G$ be a split semisimple linear algebraic group defined over $F$ and let $G(F)$ be the locally compact group of its $F$ points. The affine Bruhat-Tits building $\B_F$ of $G(F)$ is a simplicial complex on which $G(F)$ acts isometrically and plays a crucial role in understanding the structure and representations of $G(F)$. Let $\B_E$ denote the Bruhat-Tits building corresponding to $E$. Then $\B_F$ can be thought of as a sub-building of $\B_E$ and it is natural to compare properties of $\B_F$ and $\B_E$. For example take $G = \SL_2$, then $\B_F$ is a $q + 1$ regular tree where $q$ is the cardinality of the residue field of $F$. If $E$ is an unramified quadratic extension of $F$, then the bigger tree $\B_E$ is $q^2 + 1$ regular (see fig. \ref{fig:tree}). In fact, the question of how the tree $\B_F$ sits in the tree $\B_E$ turns out to depend on the ramification properties of the extension. We refer the reader to \S 5 in \cite{Kottwitz} for a lovely discussion.

\begin{figure}[htb!]
\centering%
\includegraphics[width=0.5\textwidth]{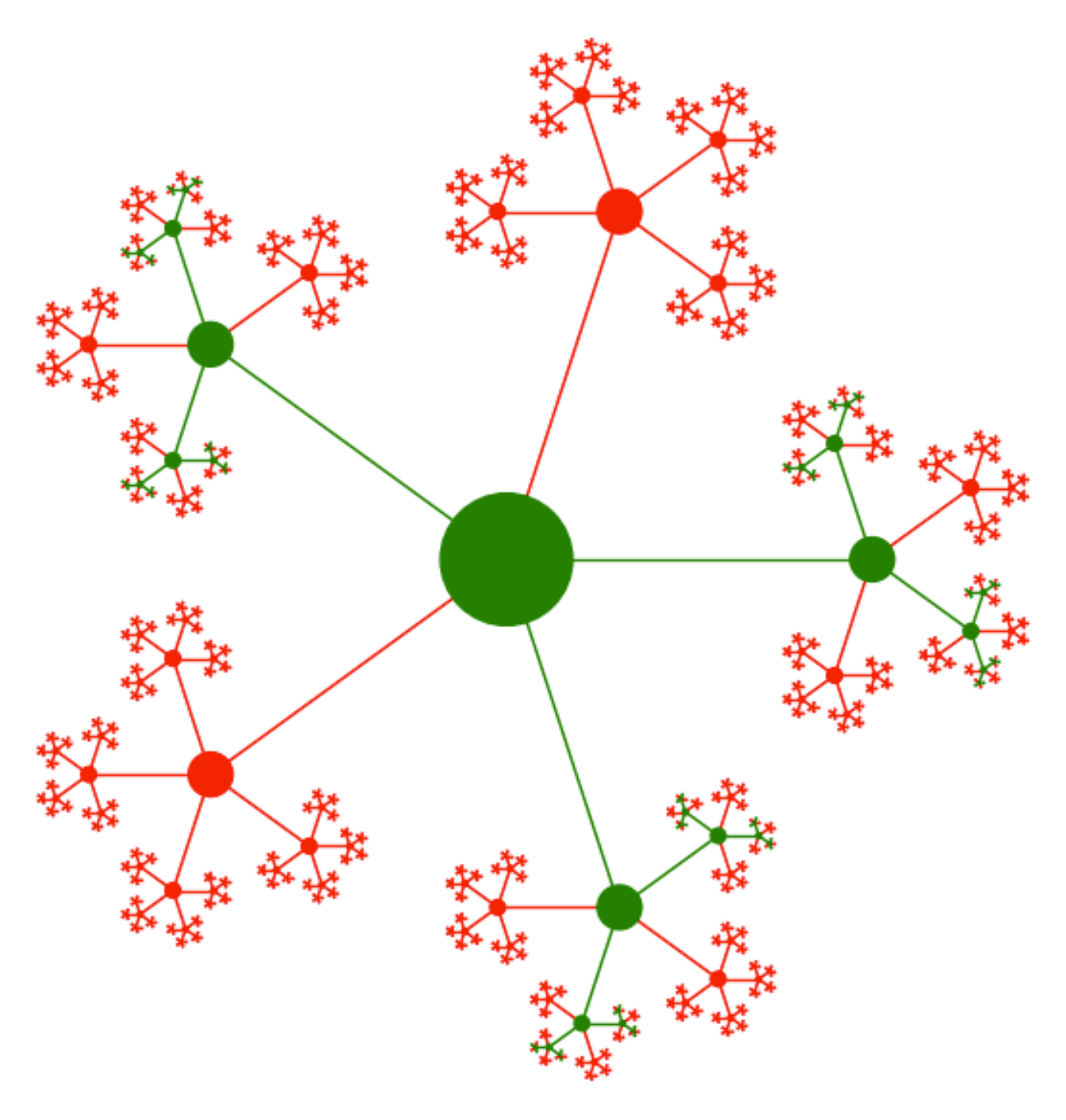}
\caption{ Embedded trees for $\SL_2$ (unramified case)}
\label{fig:tree}
\end{figure}

\vskip 0.2in

Now let $E/F$ be as above and let $f$ be the degree of the residue field extension. Then $e=n/f$ is the degree of ramification of $E$ over $F$. Fix valuations $v$ and $w$ on $F$ and $E$ and denote by $O_F$ (resp. $O_E$) the respective valuation rings and by $P_F$ (resp. $P_E$), the respective prime ideals in $O_F$ (resp. $O_E$). Then $P_F O_E = P_E^e$; moreover if $\varpi$ is a uniformizing element in $P_E$ and $\pi$ is a uniformizing element in $P_F$ then $\pi=\varpi^e \cdot u$ for some unit in $O_E$. Let $q$ be the cardinality of the residue field of $F$. Let $B(E), N(E)$ (resp. $B(F), N(F)$) denote the $(B, N)$-pairs of $G$ with respect to $F$ (resp. $E$). Let $\B_F$ (resp. $\B_E$) denote the respective buildings attached to the $(B, N)$-pairs and let $d_F$ (resp. $d_E$) denote the $G(F)$ (resp. $G(E)$) invariant metrics.

In this short note, we compare volumes of balls in $\B_E$ and $\B_F$. Fix a base point in the building $\B_F$ and let  $K_F$ (resp. $K_E$) denote the stabilizer in $G_F$ (resp. $G_E$) of this basepoint. Let $\mu_{F}$ (resp. $\mu_E$) denote Haar measures on $G(F)$ (resp. $G(E)$) normalized to give the stabilizers measure $1$. These induce measures on the respective Bruhat-Tits buildings which we also denote $\mu_{F}$ (resp. $\mu_E$). We recall that the \emph{volume growth entropy} $h(X, d, \mu)$ of a simply connected metric space $(X, d)$ with respect to a Borel measure $\mu$ is given by the exponential growth rate of the volume of balls, that is,  $$h(X, d, \mu):= \limsup_{R \rightarrow \infty} \frac{\log \mu( B(x, R) )}{R}.$$ Note that $h$ is independent of the normalization of the measure $\mu$, that is, for any constant $c>0$, $$h(X, d, c\mu) = h(X, d, \mu).$$
With the inclusion $\B_F \hookrightarrow \B_E$, we have 3 metric measure spaces we would like to compare: $(\B_F, d_F, \mu_F)$, $(\B_F, d_E, \mu_F)$ and $(\B_E, d_E, \mu_E)$. Our main result regarding entropy is:

\begin{theorem}\label{theorem-entropy}The volume growth entropies are related by
$$h(\B_F, d_E, \mu_F) = \frac{1}{e} h(\B_F, d_F, \mu_F) = \frac{1}{n} h(\B_E, d_E, \mu_E),$$ or, equivalently
$$n h(\B_F, d_E, \mu_F) = f h(\B_F, d_F, \mu_F) =  h(\B_E, d_E, \mu_E).$$ 
\end{theorem}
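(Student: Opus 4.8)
\medskip
\noindent\textbf{Plan of proof.}
The strategy is to peel the chain of equalities into two essentially independent pieces: a comparison between the two metrics $d_F$ and $d_E$ carried by the single space $\B_F$, and a computation of the volume growth entropy of the Bruhat--Tits building of a split group purely in terms of its root datum and the residue cardinality. Once both are in hand, the theorem follows using only $n = ef$.

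For the first piece, I would show that under the inclusion $\B_F \hookrightarrow \B_E$ one has $d_E|_{\B_F} = e\, d_F$. Fix a maximal split torus $T$ and work in the standard apartment $\mathfrak a = X_*(T)\otimes\R$, which is common to $\B_F$ and $\B_E$ since $T$ is split over $F$. The vertex $\lambda(\pi)\cdot x_0$ of $\B_F$ in this apartment, attached to a cocharacter $\lambda$, is, upon writing $\pi = \varpi^{e} u$ with $u\in O_E^{\times}$, equal to $(e\lambda)(\varpi)\,\lambda(u)\cdot x_0$ with $\lambda(u)\in K_E$; hence in $\B_E$ it is the vertex at the lattice point $e\lambda$. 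Thus the vertex lattice of $\B_F$ sits inside that of $\B_E$ rescaled by $e$, and since each of $d_F$, $d_E$ is $G$-invariant and determined by its restriction to a single apartment (where it is the ambient Euclidean metric), $d_E|_{\B_F} = e\, d_F$ on all of $\B_F$. Granting this, the first equality is immediate from the scaling property of $h$ recorded in the introduction: a $d_E$-ball of radius $R$ in $\B_F$ is a $d_F$-ball of radius $R/e$ and $\mu_F$ is unchanged, whence $h(\B_F, d_E, \mu_F) = \tfrac1e\, h(\B_F, d_F, \mu_F)$.

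For the second piece, I would compute $h(\B_F, d_F, \mu_F) = c_G\log q$, with $c_G$ depending only on the root datum of $G$. Since $\mu_F$ is the push-forward of Haar measure under $g\mapsto g\cdot x_0$ and the orbit map identifies $G(F)/K_F$ with $G(F)\cdot x_0$, we have $\mu_F(B_{d_F}(x_0,R)) = \#\{\, gK_F : d_F(x_0, g x_0)\le R \,\}$. By the Cartan decomposition $G(F) = \bigsqcup_{\lambda}K_F\,\lambda(\pi)\,K_F$ over dominant cocharacters, with $d_F(x_0,\lambda(\pi)x_0) = \|\lambda\|$ for the Euclidean norm $\|\cdot\|$ on $\mathfrak a$ underlying $d_F$, this count equals $\sum_{\|\lambda\|\le R}\#\big(K_F\lambda(\pi)K_F/K_F\big)$, and each summand is equal to $q^{\langle\lambda,2\rho\rangle}$ up to a factor bounded above and below by constants depending only on the Weyl group. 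Hence $\log\mu_F(B_{d_F}(x_0,R)) = \big(\max_{\lambda\ \text{dominant},\ \|\lambda\|\le R}\langle\lambda,2\rho\rangle\big)\log q + O(\log R) = c_G R\log q + O(\log R)$, where $c_G := \max\{\langle v,2\rho\rangle : v\in\mathfrak a,\ v\ \text{dominant},\ \|v\|\le 1\}$ is a constant of the root datum; thus $h(\B_F, d_F, \mu_F) = c_G\log q$. Running the identical argument over $E$ — same split group, same root datum, same norm $\|\cdot\|$ on $\mathfrak a$, uniformizer $\varpi$ in place of $\pi$, residue field of cardinality $q^{f}$ — gives $h(\B_E, d_E, \mu_E) = c_G\log(q^{f}) = f\,h(\B_F, d_F, \mu_F)$. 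Combining this with the first equality and $n = ef$ yields $h(\B_F, d_E, \mu_F) = \tfrac1e h(\B_F, d_F, \mu_F) = \tfrac{f}{n} h(\B_F, d_F, \mu_F) = \tfrac1n h(\B_E, d_E, \mu_E)$, which is the theorem.

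I expect the main obstacle to lie in the second piece, specifically in the passage from the double-coset count to the exponential rate: one must verify that the polynomial-in-$q$ prefactors and the contributions of non-maximal $\langle\lambda,2\rho\rangle$ are absorbed into the $O(\log R)$ term and hence do not affect the $\limsup$, and one must be careful that $d_F$ restricted to an apartment really is the flat Euclidean metric for which the optimization defining $c_G$ makes sense, rather than a gallery or word metric. A minor additional point: to apply the Cartan decomposition cleanly one wants $x_0$ to be a special vertex; if it is not, the orbit $G(F)\cdot x_0$ is still cocompact in $\B_F$, and comparing balls shows the vertex count grows at the same exponential rate, so $h$ is unaffected.
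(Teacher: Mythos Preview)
Your proposal is correct, and its overall architecture matches the paper's: establish $d_E|_{\B_F}=e\,d_F$ (the paper's equation~\eqref{eq:1}) for the first equality, then show that the entropy of the building is a root-datum constant times $\log q$, so that passing from $F$ to $E$ simply replaces $q$ by $q^f$. Where you differ is in the counting mechanism. You use the Cartan decomposition $G(F)=\bigsqcup_\lambda K_F\,\lambda(\pi)\,K_F$ over dominant cocharacters together with $|K_F\lambda(\pi)K_F/K_F|\asymp q^{\langle 2\rho,\lambda\rangle}$, whereas the paper works through the Iwahori subgroup $I$ and the affine Weyl group $\widetilde W$, using $|IwI/I|=q^{l(w)}$ and the truncated Poincar\'e series of $\widetilde W$ (Lemmas~\ref{lemma:distance_is_length}--\ref{lemma:volume-asymp}). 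These are two faces of the same count, since $l(\varpi^\lambda)=\langle 2\rho,\lambda\rangle$ for dominant $\lambda$ and the finite Weyl group contributes only a bounded factor. Your route has the mild advantage of naming the entropy constant explicitly as $c_G=\max\{\langle 2\rho,v\rangle:v\text{ dominant},\ \|v\|\le 1\}$; the paper's passage through $l(w)$, which is only comparable rather than equal to the apartment distance (Lemma~\ref{lemma:distance_is_length}), does not pin this constant down --- but neither argument needs to, since the theorem concerns only ratios and the constant, depending solely on the root system, is the same over $F$ and over $E$.
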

\medskip
\noindent We discuss the construction of the Bruhat-Tits building in \S \ref{section:defn-building}. The proof of Theorem \ref{theorem-entropy} proceeds by direct computation of volumes of balls. Along the way, we compare metrics on $\B_F$ and $\B_E$ (\ref{eq:1}) a result which may be of independent interest. In \S \ref{sec:proof-trees}, we prove the theorem in the simplified case of trees before proving it in full generality in \S \ref{sec:proofs-main-theorems}.  The volume growth entropy for compact quotients of Bruhat-Tits buildings has been computed explicitly by Leuzinger~\cite{Leuzinger} who also showed that (appropriately normalized) that it is equal to the entropy of the geodesic flow. The volume comparison result in this paper has implications for homogeneous dynamics. Geodesic flows on quotients of  symmetric spaces and buildings have been  extensively studied. In \cite{AGP}, we proved an analogue of the logarithm laws of Sullivan (\cite{Sullivan}) and Kleinbock-Margulis (\cite{KleinMarg}) for function fields. These results describe the asymptotic behaviour of geodesic trajectories to shrinking cuspidal neighborhoods. Using Theorem \ref{theorem-entropy}, ``relative" versions of logarithm laws can be established. Details will appear elsewhere. 

\section*{Acknowledgements} Parts of this work was done when AG and JSA were visiting the Institute for Mathematical Sciences, Chennai. They thank the institute for its hospitality and excellent working conditions. 

\section{Definition of the building}
\label{section:defn-building}

Recall from \cite{BT1, BT2, Rabinoff} the construction of a principal apartment for $G$.
Let $T$ be a maximal split torus in $G$.
Denote by $X^*$ the real vector space $X^*(T)\otimes \R$.
Here, as is usual, $X^*(T)$ denotes the lattice of algebraic homomorphisms $T\to \mathbf G_m$.
The dual space $X_*$ can be identified with $X_*(T)\otimes \R$, where $X_*(T)$ is the lattice of cocharacters $\mathbf G_m\to T$.
Let $\Phi=\Phi(G,T)\subset X^*(T)$ denote the root system of $G$ with respect to $T$.

The affine apartment $\A(G,T)$ is just $X_*$, together with a hyperplane configuration $H_{\alpha+n}$, where
\begin{equation*}
	H_{\alpha+n}=\{x\in X_*|\alpha(x)+n=0\}
\end{equation*}
is an affine hyperplane in $X_*$ for each $\alpha\in\Phi$ and $n\in \Z$.

This hyperplane configuration allows us to think of $\A$ as  a simplicial complex.
The vertices of this simplicial complex are the points in the weight lattice
\begin{equation*}
	Q=\{x\in \A|\alpha(x)\in\Z\text{ for all } \alpha\in\Phi\}.
\end{equation*}

The affine linear functional $x\mapsto \alpha(x)+n$ is usually denoted by $\alpha+n$.
The affine root system is the set of affine linear functionals on $\A$ given by
\begin{equation*}
	\Psi=\{\alpha+n|\alpha\in\Phi,\;n\in\Z\}.
\end{equation*}
If $N$ denotes the normalizer $N_{G(F)}T$ of $T$ in $G(F)$, then $N$ contains $T(F)$ as a normal subgroup with quotient $W$, which is, by definition, the Weyl group of $G$ with respect to $T$.
In fact $N$ is a semidirect product:
\begin{equation*}
	N=T(F)\rtimes W
\end{equation*}

Fix a uniformizing element $\pi$ in the ring of integers of $F$.
Recall that, if for each $\mu\in X_*(T)$, we define $\pi^\mu\in T(F)$ to be the element $\mu(\pi)$, then $\mu\mapsto\pi^\mu$ gives rise to an isomorphism
\begin{equation*}
	X_*(T)\widetilde\to\frac{T(F)}{T(O)}.
\end{equation*}
Denote by the $\vartheta$ the inverse of the above isomorphism composed with the quotient map $T(F)\to T(F)/T(O)$.

The affine apartment $\A$ is an $N$-space (in the sense that there is an action of $N$ on $\A$ which preserves the hyperplane configuration).
The action is as follows:
\begin{equation*}
	(tw)\cdot \mu=\vartheta(t)+{}^w\mu
\end{equation*}
The reason that the hyperplane configuration is preserved is that 
\begin{align*}
	\alpha+n[(tw)\cdot\mu]&=\alpha(\vartheta(t))+{}^w\mu)+n\\
	&=\alpha^w(\mu)+\alpha(\vartheta(t))+n\\
	&=[\alpha^w+\alpha(\vartheta(t))](\mu)
\end{align*}
so that composing with the $N$-action on $\A$ takes an affine root to another affine root.

The action of $N$ on $\A$ factors through the quotient of $N$ by $T(O)$, which is called the affine Weyl group of $G$:
\begin{equation*}
  \widetilde W = \frac{T(F)}{T(O)}\rtimes W.
\end{equation*}

To each point $x\in\A$ is associated a parahoric subgroup $G_x$ of $G(F)$ such that $N\cap G_x$ is the isotropy subgroup of $x$ in $N$.
The Bruhat-Tits building of $G$ is constructed as follows:
\begin{equation*}
	\B:=(G(F)\times\A)/\sim
\end{equation*}
where \lq\lq$\sim$\rq\rq{} is the equivalence relation on acts on $G\times\A$ for which $(g,x)\sim(h,y)$ if there exits $n\in N$ such that
\begin{equation*}
	n\cdot x=y \text{ and } g\inv hn\in G_x
\end{equation*}
where $G_x$ is the parahoric subgroup corresponding to the point $x\in\A$.

For example, if $\id_G$ denotes the identity element of $G(F)$, then
\begin{equation*}
	(\id,x)\sim (\id,y)
\end{equation*}
if and only if there exists $n\in N\cap G_x$ such that $n\cdot x=y$, which amounts to requiring that $x=y$.
Thus, $\A$ is itself embedded in $\B$ as $\{\id_G\}\times \A$.

$G$ acts on $\B$ via
\begin{equation*}
	g\cdot(h,x)=(gh,x).
\end{equation*}

For example, under this action, $g\cdot(\id,x)\sim (\id,x)$ if and only if $(g,x)\sim(\id,x)$, or in other words, if and only if there exists $n\in N$ such that $n\cdot x=x$ and $gn\in G_x$.
Since $N\cap G_x$ is the isotropy subgroup of $x$ in $N$, $n$ itself must lie in $G_x$.
Therefore $G_x$ is the isotropy subgroup of $(\id,x)$ in $G$.
More generally, $g\cdot(h,x)\sim(h,x)$ if and only if $(gh,x)\sim(h,x)$ if and only if $h\inv gh\in G_x$.
In other words, the stabilizer of $(h,x)$ is the parahoric subgroup $hG_xh\inv$.

Had we chosen a different split torus $T'$ which was conjugate to $T$, we would have begun with an apartment $\A'$ corresponding to $T'$.
We would always be able to find $g\in G(F)$ such that $gTg\inv=T'$
The building $\B'$ constructed from $\B$ would be isomorphic to $\B$ as a $G(F)$-space by identifying $\A'$ with $g\cdot \A\subset\B$.

These subsets $g\cdot A$ are known as the apartments of $\B$.
A basic fact about the building is that any two points are contained in an apartment.

Thus, in order to define a metric on $\B$, we take the following strategy: given $x,y\in \B$, we find an apartment $g\cdot\A$ such that $x$ and $y$ lie in $g\cdot \A$.
In other words, $g\inv x$ and $g\inv y$ lie in $\A$.
Now $\A$ itself has, up to scaling, a unique $W$-invariant inner product, and it is the distance between $g\inv x$ and $g\inv y$ with respect to a fixed such inner product that we declare to be the distance between $x$ and $y$.

We may normalize the metric on $\B$ by normalising the $W$-invariant inner product on $\A$, which may be achieved by declaring that the diameter of each connected component of the complement of the union of the hyperplane configuration $H_{\alpha+n}$ as $\alpha+n$ varies over the set $\Psi$ of affine roots has diameter one.

The building $\B$ inherits the structure of a simplicial complex from the apartment $\A$.
A simplex in $\B$ is a $G$-translate of a simplex in $\A$.
\section{Behaviour under field extensions}

\begin{equation*}
\begin{xy}
	(0,0)*{\bullet};
       (5,5)*{\circ}**@{-};
       (0,0)*{\bullet};
       (0,10)*{\circ}**@{-};
       (5,5)*{\circ}**@{.};
       (10,10)*{\bullet}**@{-};
       (5,15)*{\circ}**@{-};
       (0,10)*{\circ}**@{.};
       (5,15)*{\circ};
       (5,5)*{\circ}**@{.};
       (0,10)*{\circ};
       (0,20)*{\bullet}**@{-};
       (5,15)*{\circ}**@{-};
       (10,20)*{\circ}**@{.};
       (10,10)*{\bullet}**@{-};
       (15,5)*{\circ}**@{-};
       (5,5)*{\circ}**@{.};
       (0,10)*{\circ}**@{.};
       (10,0)*{\circ}**@{.};
       (0,0)*{\bullet}**@{-};
       (10,0)*{\circ};
       (15,5)*{\circ}**@{.};
       (20,0)*{\bullet}**@{-};
       (10,0)*{\circ}**@{-};
       (20,0)*{\bullet};
       (20,10)*{\circ}**@{-};
       (15,5)*{\circ}**@{.};
       (15,15)*{\circ}**@{.};
       (10,10)*{\bullet}**@{-};
       (15,15)*{\circ};
       (20,10)*{\circ}**@{.};
       (15,15)*{\circ};
       (10,20)*{\circ}**@{.};
       (15,25)*{\circ}**@{.};
       (20,20)*{\bullet}**@{-};
       (20,10)*{\circ}**@{-};
       (15,15)*{\circ};
       (20,20)*{\bullet}**@{-};
       (15,15)*{\circ};
       (15,25)*{\circ}**@{.};
       (10,20)*{\circ}**@{.};
       (10,30)*{\bullet}**@{-};
       (10,20)*{\circ};
       (5,25)*{\circ}**@{.};
       (5,35)*{\circ}**@{.};
       (0,30)*{\circ}**@{.};
       (0,20)*{\bullet}**@{-};
       (5,25)*{\circ}**@{-};
       (0,30)*{\circ}**@{.};
       (5,25)*{\circ};
       (10,30)*{\bullet}**@{-};
       (5,35)*{\circ}**@{-};
       (5,25)*{\circ};
       (5,15)*{\circ}**@{.};
       (0,20)*{\bullet};
       (0,30)*{\circ};
       (0,40)*{\bullet}**@{-};
       (5,35)*{\circ}**@{-};
       (10,30)*{\bullet};
       (15,25)*{\circ}**@{-};
       (20,30)*{\circ}**@{.};
       (20,20)*{\bullet}**@{-};
       (20,30)*{\circ};
       (20,40)*{\bullet}**@{-};
       (15,35)*{\circ}**@{-};
       (15,25)*{\circ}**@{.};
       (20,30)*{\circ};
       (15,35)*{\circ}**@{.};
       (10,30)*{\bullet}**@{-};
       (5,35)*{\circ};
       (15,35)*{\circ}**@{.};
       (10,40)*{\circ};
       (5,35)*{\circ}**@{.};
       (0,40)*{\circ};
       (10,40)*{\circ}**@{-};
       (20,40)*{\bullet}**@{-};
       (15,35)*{\circ};
       (10,40)*{\circ}**@{.};
\end{xy}
\end{equation*}
\begin{center}
Embedded apartments for type $C_2$
\end{center}

\vskip 0.2in

Let $E$ be a finite extension of $F$ of degree $n$.
Suppose that the residue field extension is of degree $f$.
Then $e=n/f$ is the degree of ramification of $E$ over $F$.
Let $P_F$ denote the prime ideal in the ring of integers of $F$.
Then $P_FO_E=P_E^e$.
If $\varpi$ is a uniformizing element in $P_E$ and $\pi$ is a uniformizing element in $P_F$ then $\pi=\varpi^e \cdot u$ for some unit in $O_E$.

Let $G$ be a split semisimple group over $F$ and fix a maximal $F$-split torus $T$.
Let $\A_E$ and $\A_F$ denote the apartments of $G(E)$ and $G(F)$ with respect to $T$.
As sets, these are exactly the same; they are both isomorphic to $X_*(T)\otimes \R$.
However, the identity map is not the correct way to identify $\A_F$ with $\A_E$.
Recall that $\A_F$ and $\A_E$ comes with actions of $N_F:=N_{G(F)}T$ and $N_E:=N_{G(E)}T$ respectively.
Let $i:N_F\hookrightarrow N_E$ denote the emedding of $N_F$ as a subgroup of $N_E$.
Let $e:\A_F\to A_E$ denote multiplication by $e$.
Then the following diagram commutes.
\begin{equation}
	\label{eq:actions}
	\xymatrix{
		N_F\times \A_F \ar[r] \ar[d]_{i\times e}\ar@{}[dr]|\circlearrowright& \A_F \ar[d]^e\\
		N_E\times \A_E \ar[r] & \A_E
	}
\end{equation}
Furthermore, 
\begin{equation}
	\label{eq:parahorics}
	G(E)_{e(x)}\cap G(F)=G(F)_x
\end{equation}
for each $x\in \A_F$.
We may apply the construction of Section~\ref{section:defn-building} to $G(F)$ as well as $G(E)$ resulting in two different buildings, which we denote by $\B_F$ and $\B_E$ respectively.
The equations (\ref{eq:actions}) and (\ref{eq:parahorics}) imply that the map $i\times e:G(F)\times A_F\to G(E)\times A_E$ descends to an inclusion of $\B_F$ in $\B_E$.

The stabilizer of the image of $(\id_{G(F)},0)$ in $\B_F$ (which is a vertex of $\B_F$) is $G(O_F)$, and similarly, the stabilizer of $(\id_{G(E)},0)$ in $\B_E$ is $G(O_E)$.

Since multiplication by $e$ is an isomorphism $\A_F\to \A_E$ of simplicial complexes, it follows that, for $x,y\in A_F$ (which can also be thought of as points of $\A_E$)
\begin{equation}
  \label{eq:1}
  d_E(x,y)=ed_F(x,y).
\end{equation}
Since any two points in $B_F$ are contained in some apartment, \eqref{eq:1} remains valid for any $x,y\in \B_F$.

\section{Trees}\label{sec:proof-trees}

\begin{equation*}
\begin{xy}
        (0,0)*{\bullet};
        (5,5)*{\circ}**@{-};
        (10,0)*{\circ}**@{.};
        (5,5)*{\circ};
        (10,10)*{\bullet}**@{-};
        (5,15)*{\circ}**@{-};
        (0,20)*{\bullet}**@{-};
        (5,15)*{\circ};
        (0,10)*{\circ}**@{.};
        (10,10)*{\bullet};
        (15,10)*{\circ}**@{-};
        (15,5)*{\circ}**@{.};
        (15,10)*{\circ};
        (20,10)*{\bullet}**@{-};
        (25,15)*{\circ}**@{-};
        (30,20)*{\bullet}**@{-};
        (25,15)*{\circ};
        (20,20)*{\circ}**@{.};
        (20,10)*{\bullet};
        (25,5)*{\circ}**@{-};
        (30,0)*{\bullet}**@{-};
        (25,5)*{\circ};
        (30,10)*{\circ}**@{.};      
\end{xy}
\end{equation*}

\begin{center}
Embedded trees for $\SL_2$ (totally ramified case)
\end{center}

\vskip 0.2in

 In this section we sketch a proof of Theorem~\ref{theorem-entropy}, in the case $G= \SL_2$, and thus where the buildings $\B_F$ and $\B_E$ are regular trees. We first decompose the field extension $E \rightarrow F$ as a chain of two extensions $E' \rightarrow F$ and $E \rightarrow E'$, where $E' \rightarrow F$ is a totally ramified extension of degree $e$ and $E \rightarrow E'$ is an unramified extension of degree $f$. We fix notation: $\log$ will denote natural logarithm, and let $\alpha_R(t)$ denote the volume of the ball (with respect to counting measure on vertices) of radius $R$ in a $(t+1)$-regular tree, that is $$\alpha_R(t) = (t+1)t^{R-1}.$$ We note that $$\limsup_{r \rightarrow \infty} \frac{\log \alpha_R(t) }{R} = \log t.$$ There is a prime power $q =p^n$ so that $\B_F$ is a $q+1$-regular tree, which immediately yields $$h(\B_F, d_F) = \log q.$$ Here, and below, we drop the measure $\mu$ from the definition of entropy as we will work exclusively with counting measure, taking advantage of the fact that the entropy $h$ is invariant under scaling. We treat the unramified and fully ramified cases in turn, before working out the general case. 

\subsection{Totally Ramified Extensions} As above, $E' \rightarrow F$ is a totally ramified extension of degree $e$. Then the tree $\B_{E'}$ is formed by subdividing each edge of $\B_F$ into $e$ segments by adding $e-1$ vertices. To each of these, we attach a rooted tree, where the root has valence $q-2$, and all the descendants have valence $q+1$ ($q$ descendants and one parent). Thus, the resulting $\B_{E'}$ is a $q+1$-regular tree. The metric $d_{E'}$ gives each edge length $1$, thus, we have $$\mu(B(R, E') \cap \B_F) = \mu\left(B\left(\frac{R}{e}, F\right)\right) = \alpha_{\frac{R}{e}} (q),$$ where $B(R, E')$ is the ball of radius $R$ in the metric $d_{E'}$ (we will also write $B(R, F)$ and $B(R, E)$ to mean balls of radius $R$ in the $d_F$ metric and $d_E$ metrics respectively). In this case, we have that the metric spaces $(\B_F, d_F)$ and $(\B_{E'}, d_{E'})$ are isometric (both being metric $q+1$-regular trees), but $(\B_F, d_{E'})$ is not isometric to either, since the inclusion $(\B_F, d_F) \hookrightarrow (\B_{E'}, d_{E'})$ is not an isometry.

\subsection{Unramified Extensions} Now consider the unramified extension $E \rightarrow E'$ of degree $f$. Here, the tree $\B_{E}$ is formed from the tree $\B_{E'}$ by adding $q^f-q$ edges to each vertex, and then rooted $q^f+1$-valent trees to each new vertex (that is, the root has $q^f$-children, and each descendant has $q^f$ further descendants). Here, the inclusion $\B_{E'}, d_{E'}) \hookrightarrow (\B_{E}, d_{E})$ is an isometry, but the metric spaces  $(\B_E, d_E)$ and $(\B_{E'}, d_{E'})$ are clearly not isometric. Thus, $$\mu(B(R, E) \cap \B_{E'}) = \mu(B(R, E')) = \alpha_R(q).$$

\subsection{The general case} To construct $\B_E$ from $\B_F$, we combine the two procedures, and thus obtain $$\mu(B(R, E) \cap \B_{F}) = \mu\left(B\left(\frac R e, F\right)\right) = \alpha_{\frac R e}(q).$$ Taking $\log_q$, dividing by $R$, and letting $R \rightarrow \infty$, we obtain, as desired $$h(\B_F, d_E, \mu_F) = \frac{1}{e} h(\B_F, d_F, \mu_F) = \frac{1}{ef} h(\B_E, d_E, \mu_E),$$ and noting that $ef=n$, we obtain Theorem~\ref{theorem-entropy}.

\section{Proof of the Main Theorem}
\label{sec:proofs-main-theorems}

The proof of the Main Theorem follows along similar lines to the argument in \S \ref{sec:proof-trees}. In the three lemmas below, we explicitly compute the volumes of balls in Bruhat-Tits buildings. The result follows upon taking ratios of volume entropies.

\begin{lemma}
  \label{lemma:distance_is_length}
  For any $x\in B$ and $w\in \widetilde W$,
  \begin{equation*}
    d(wx,x)\asymp l(w),
  \end{equation*}
\noindent where $l$ is the length function of the affine Weyl group $\widetilde W$.   
\end{lemma}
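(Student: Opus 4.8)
The statement $d(wx,x) \asymp \ell(w)$ should be proved by relating the word metric on the affine Weyl group $\widetilde W$ to the geometry of a single apartment. The strategy is: fix the base point $x$ to be (the image of) the origin $0 \in \A$, so that $wx$ is also a point of the apartment $\A$ for every $w \in \widetilde W$; then both $d(wx,x)$ and $\ell(w)$ are quantities computed entirely inside $\A$, and one compares them there. For a general $x$, one reduces to a vertex or to a point of the fundamental alcove and absorbs the bounded discrepancy into the implied constants of $\asymp$.

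First I would recall the standard description of the length function: for $w \in \widetilde W$, $\ell(w)$ equals the number of affine hyperplanes $H_{\alpha+n}$ (walls) in $\A$ that separate the fundamental alcove $C_0$ from its image $wC_0$. This is the combinatorial (gallery) distance between alcoves. On the other hand, $d(wx,x)$ is the Euclidean distance in $\A$ with respect to the chosen $W$-invariant inner product, normalized so that each alcove has diameter one. So the lemma reduces to the purely geometric assertion: the number of walls separating $C_0$ from $wC_0$ is comparable, up to multiplicative constants depending only on the root system, to the Euclidean distance between a point of $C_0$ and the corresponding point of $wC_0$.

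For the geometric comparison I would argue as follows. Upper bound on $d$ by $\ell$: walking from $C_0$ to $wC_0$ along a minimal gallery crosses $\ell(w)$ alcoves, each of bounded diameter, so $d(wx,x) \le (\text{diam})\cdot(\ell(w)+1) \ll \ell(w)+1$. Lower bound on $d$ by $\ell$ (equivalently, upper bound on $\ell$ by $d$): the walls crossed by the straight segment from $x$ to $wx$ already number at least a constant times $d(wx,x)$, because along any line the hyperplanes from a fixed finite set of directions $\{\alpha \in \Phi\}$ with integer translates are crossed at a uniform linear rate — more precisely, for generic directions the segment of Euclidean length $L$ crosses on the order of $L$ such hyperplanes, and the minimal gallery length is at least the number crossed by any particular path minus a bounded correction; one must be slightly careful when the segment is parallel to some walls, but a generic perturbation of $x$ within $C_0$ fixes this and only changes things by a bounded amount. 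Combining the two inequalities gives $d(wx,x) \asymp \ell(w)$ for $x \in C_0$, and the bounded-distortion reduction handles arbitrary $x \in \B$.

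The main obstacle I anticipate is making the lower bound $\ell(w) \gg d(wx,x)$ clean: gallery distance is the number of separating walls, and one needs that the straight Euclidean segment actually realizes (up to a constant) this count, which requires ruling out pathological alignments of the segment with families of parallel walls and handling the fact that translations $\pi^\mu$ in $\widetilde W$ can be long while moving $x$ a comparatively short Euclidean distance only in low rank — in general the number of walls of the form $H_{\alpha+n}$ crossed by the segment from $0$ to $\mu$ is $\sum_{\alpha \in \Phi^+} |\alpha(\mu)|$, which is indeed $\asymp \|\mu\|$ since $\Phi^+$ spans. So the real content is the elementary but slightly fiddly lattice-point/hyperplane-counting estimate, uniform over $w$; everything else (the reduction to $\A$, the reduction to a vertex, the diameter-one normalization) is bookkeeping.
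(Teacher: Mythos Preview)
Your argument is correct and complete in outline, but it is packaged differently from the paper's proof. You work entirely with the gallery interpretation of $\ell(w)$ as the number of walls separating $C_0$ from $wC_0$, obtain the bound $d\ll\ell$ by walking a minimal gallery of alcoves of bounded diameter, and obtain $\ell\ll d$ by counting the hyperplanes $H_{\alpha+n}$ crossed by the straight segment, reducing to the estimate $\sum_{\alpha\in\Phi^+}|\alpha(\mu)|\asymp\|\mu\|$ (valid because $\Phi^+$ spans $X^*$). The paper instead decomposes $w=\varpi^{\mu}w_0$ with $\mu$ dominant and $w_0\in W$, quotes the explicit formula $\ell(\varpi^{\mu})=\langle\rho,\mu\rangle$, and then shows $\langle\rho,\mu\rangle\asymp\|\mu\|$ on the dominant cone by a compactness argument (the hyperplane $\{\langle\rho,x\rangle=1\}$ meets the dominant cone in a compact set); the bounded contribution of $w_0$ is absorbed at the end. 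The two arguments meet at the same core inequality, since for dominant $\mu$ one has $\sum_{\alpha>0}|\alpha(\mu)|=2\langle\rho,\mu\rangle$; your spanning argument and the paper's compactness argument are just two ways of seeing that this linear functional is comparable to the norm. Your route avoids citing the length formula for translations and is slightly more self-contained; the paper's route makes the semidirect-product structure $\widetilde W=X_*(T)\rtimes W$ do the work and is a line shorter once that formula is in hand.
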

\begin{proof}
  For $\mu\in X_*(T)$ let $\varpi^\mu$ denote the image of $\mu(\varpi)$ in $T(F)/T(O)$, which we may regard as an element of $\widetilde W$.
  By \cite{MR2250034}, $l(\varpi^\mu)=\langle \rho,\mu\rangle$, where $\rho$ denotes half the sum of positive roots.
  Consider the hyperplane 
  \begin{equation*}
    H=\{x\in X_*\;|\;\langle\rho,x\rangle = 1\}.
  \end{equation*}
  If $x\in X_*$ is dominant and non-zero, then clearly, $\langle \rho,x\rangle >0$ (since $\langle  \alpha,x\rangle \geq 0$ for all $\alpha>0$).
  Therefore, this hyperplane intersects each ray in the dominant cone at exactly one point.
  It follows (since the dominant cone is pointed - see Theorem~1.26 of \cite{Mahatab}) that the intersection of $H$ with the dominant cone is compact.
  It follows that there exist positive constants $C$ and $c$ such that for any dominant point $x$ in $H$,
  \begin{equation*}
    c<\|x\|<C.
  \end{equation*}
  By scaling any dominant point of $X_*$ into $H$, it follows that
  \begin{equation*}
    c\langle\rho,x\rangle<\|x\|<C\langle \rho,x\rangle.
  \end{equation*}
  Now any $w\in \widetilde W$ can be written as $w=\varpi^\mu w_0$ for $w_0\in W$ and $\mu\in X_*(T)^{++}$.
  Since the lengths of elements in the finite Weyl group form a bounded set, and the metric in $\A$ is $W$-invariant, it follows that $d(w\cdot 0,0)\asymp l(w)$.
  Since $\B$ is homogeneous, we get the result for all $x\in \B$.
\end{proof}
 Let
\begin{equation*}
  S(q,R) : = \sum_{w\in \tilde W,\; l(w)\leq R} q^{l(w)}.
\end{equation*}

\begin{lemma}
  For every $R\geq 0$, 
  \begin{equation*}
    \mu(B(x,R))\asymp S(q, R).
  \end{equation*}
\end{lemma}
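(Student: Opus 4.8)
The plan is to show that $\mu(B(x,R))$, up to multiplicative constants, counts the affine Weyl group elements of length at most $R$, weighted by $q^{l(w)}$, by reducing the measure of a ball to a sum over vertices and then organizing the vertices by their position in the affine Weyl group orbit. By homogeneity of $\B$ (the $G(F)$-action is transitive on vertices of a fixed type, and there are boundedly many types), it suffices to treat $x$ equal to the base vertex corresponding to $(\id,0)$, whose stabilizer is the parahoric $G(O_F)$. Since $\mu_F$ was normalized so that this stabilizer has measure $1$, the measure of a union of vertices (and the bounded-dimensional simplices spanned by them) is comparable to the number of vertices it contains; so the first step is to replace $\mu(B(x,R))$ by the cardinality of the set of vertices within distance $R$ of $0$, at the cost of a constant depending only on $G$.

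Next I would identify this vertex set with a subset of $\widetilde W$. The vertices of the apartment $\A$ are the weight-lattice points, on which $\widetilde W = (T(F)/T(O))\rtimes W$ acts, and every vertex of $\B$ at distance $\leq R$ from $0$ lies in some apartment through $0$; the $G(O_F)$-orbits of such vertices are indexed, via the Cartan/affine Bruhat decomposition, by dominant cocharacters, equivalently by a transversal in $\widetilde W$. Concretely, using Lemma~\ref{lemma:distance_is_length}, a vertex $w\cdot 0$ lies in $B(0,R)$ exactly when $l(w)\asymp d(w\cdot 0,0)\leq R$ (after absorbing the implied constants into a redefinition of $R$, which does not affect the final asymptotic), so the vertex set is comparable to $\{w\in\widetilde W/W : l(w)\lesssim R\}$. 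The point of the weight $q^{l(w)}$ is that the number of vertices in the $G(O_F)$-orbit of $w\cdot 0$ — i.e.\ the number of chambers/vertices at combinatorial distance corresponding to $w$ — grows like $q^{l(w)}$ up to a bounded factor; this is the standard Iwahori–Hecke count $\#(BwB/B)=q^{l(w)}$, which I would cite. Summing the orbit sizes over the relevant transversal and comparing with the sum over all of $\widetilde W$ (the fibers of $\widetilde W\to\widetilde W/W$ have bounded size $|W|$) gives $\#\{\text{vertices in }B(0,R)\}\asymp\sum_{l(w)\leq R}q^{l(w)} = S(q,R)$.

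The main obstacle is bookkeeping the comparison between the geometric ball $B(x,R)$ in the $W$-invariant metric and the combinatorial/group-theoretic object $S(q,R)$: one must be careful that (i) the $\asymp$ in Lemma~\ref{lemma:distance_is_length} only controls $d$ versus $l$ up to constants, so the radius is only determined up to a constant factor — this is harmless for volume growth entropy but needs to be stated; (ii) vertices near $0$ may lie in several apartments, so one needs the building axiom (any two points lie in a common apartment) together with the retraction onto $\A$ based at $0$ to see that each such vertex corresponds to a well-defined $W$-orbit of weights, with multiplicity bounded by the number of chambers containing it; and (iii) the passage from counting vertices of one type to counting all vertices, and from vertices to the measure $\mu$ of the full ball (which includes higher-dimensional simplices), each costs only a bounded factor because $G$ has finite rank. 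Once these comparisons are in place, collecting the constants yields $\mu(B(x,R))\asymp S(q,R)$ for all $R\geq 0$, as claimed. I do not expect any genuinely new estimate to be needed beyond the cited length formula $l(\varpi^\mu)=\langle\rho,\mu\rangle$ and the Iwahori factorization count $q^{l(w)}$.
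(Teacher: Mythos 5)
Your proposal is correct and follows essentially the same route as the paper: convert the metric ball to a length ball via Lemma~\ref{lemma:distance_is_length}, then count cells using the affine Bruhat/Iwahori decomposition with each $w$-cell contributing $\asymp q^{l(w)}$. In fact the paper's own proof is only a two-line sketch invoking the Iwahori double cosets $IwI$, so your write-up (organizing by parahoric orbits of vertices and tracking the bounded multiplicative losses) supplies strictly more detail than the original.
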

\begin{proof}
  By Lemma~\ref{lemma:distance_is_length}, $d(wx,x)\asymp l(w)$.
  By the $G(F)$-invariance of the metric on $\B$, it follows that $d(IwI,I)\asymp l(w)$ where $I$ is the Iwahori subgroup of $G(F)$.
\end{proof}
\begin{lemma}
  \label{lemma:volume-asymp}
  As a function of $R$,
  \begin{equation*}
    S(q, R)\asymp W(q)\prod_i(q^{m_i}-1)q^{Rr},
  \end{equation*}
  where $r$ is the (semisimple) rank of $G$, $W(t)$ is the Poincar\'e polynomial of the finite Weyl group of $G$, the product is over an indexing set of simple roots of $G$, and $m_i$ is the exponent of the corresponding simple root.
\end{lemma}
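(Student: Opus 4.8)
The plan is to evaluate $S(q,R)=\sum_{w\in\widetilde W,\ l(w)\le R}q^{l(w)}$ asymptotically by exploiting the semidirect product structure $\widetilde W = X_*(T)\rtimes W$ together with the known length function. First I would use the standard fact (as in the proof of Lemma~\ref{lemma:distance_is_length}) that every $w\in\widetilde W$ factors uniquely as $w=\varpi^\mu w_0$ with $w_0\in W$ and $\mu\in X_*(T)^{++}$ dominant, and that $l(\varpi^\mu w_0)=\langle 2\rho,\mu\rangle - l(w_0)$ (or, for present purposes, $l(w)\asymp\langle\rho,\mu\rangle$, which suffices since $l(w_0)$ ranges over a bounded set). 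Consequently
\begin{equation*}
  S(q,R)\ \asymp\ \Bigl(\sum_{w_0\in W}q^{-l(w_0)}\Bigr)\sum_{\mu\in X_*(T)^{++},\ \langle\rho,\mu\rangle\le R}q^{\langle 2\rho,\mu\rangle},
\end{equation*}
and the first factor is, up to the substitution $q\mapsto q^{-1}$ and an overall power of $q$, the Poincar\'e polynomial $W(q)$ of the finite Weyl group; this is where the factor $W(q)$ in the statement enters.

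The heart of the matter is then the lattice-point sum $\Sigma(R):=\sum_{\mu\in X_*(T)^{++},\ \langle\rho,\mu\rangle\le R}q^{\langle 2\rho,\mu\rangle}$. I would parametrize the dominant cone by the fundamental coweights $\omega_1^\vee,\dots,\omega_r^\vee$, writing $\mu=\sum_i n_i\omega_i^\vee$ with $n_i\in\Z_{\ge 0}$, where $r$ is the semisimple rank. Then $\langle 2\rho,\mu\rangle=\sum_i c_i n_i$ for positive integers $c_i=\langle 2\rho,\omega_i^\vee\rangle$, and the constraint $\langle\rho,\mu\rangle\le R$ becomes $\sum_i c_i n_i\le 2R$. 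The sum $\sum_{\sum c_i n_i\le 2R}\prod_i q^{c_i n_i}$ is dominated by its largest term: the exponential growth rate in $R$ is governed by maximizing $\sum c_i n_i$ subject to $\sum c_i n_i\le 2R$, which is trivially $2R$, giving leading behaviour $q^{2R}$ times a polynomial correction. To pin down the constant I would sum the geometric-type series: ignoring the truncation, $\sum_{n_i\ge 0}\prod_i q^{c_i n_i}$ diverges, so one must keep the truncation, but the truncated sum is $\asymp \prod_i \frac{1}{1-q^{-c_i}}\cdot q^{2R}$ up to lower-order terms (the boundary layer contributes only polynomially). Recalling the classical identity $\prod_i(1-q^{-c_i})$ relates to $\prod_i(q^{m_i}-1)$ via the exponents $m_i$ (since the $c_i$ and the exponents are linked through $\sum m_i = $ number of positive roots and the degrees $m_i+1$), one recovers $\prod_i(q^{m_i}-1)$ after clearing denominators, and the overall power is $q^{Rr}$ once $R$ is measured in the normalization where $\langle\rho,\cdot\rangle\le R$ cuts out roughly an $r$-dimensional simplex of the correct scale — here I would be careful that the metric normalization of \S\ref{section:defn-building} (diameter-one alcoves) makes $l(w)\asymp d(wx,x)$ with the right constant so that "radius $R$" corresponds to $\langle\rho,\mu\rangle\lesssim R$, contributing the factor $r$ in the exponent $q^{Rr}$.

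The main obstacle I anticipate is bookkeeping the constants and the precise exponent: distinguishing $\asymp$ (which only demands matching up to bounded multiplicative constants, so the $W(q)\prod_i(q^{m_i}-1)$ prefactor need only be correct up to such a constant) from an exact count, and making sure the truncation of the lattice sum genuinely produces $q^{Rr}$ rather than $q^{2R}$ or some other power — this is purely a matter of reconciling the normalization of $l$ with the normalization of $d$ fixed earlier. Since the lemma is stated with $\asymp$, I would not need the sharp constant; it suffices to show (i) the sum is bounded above and below by constant multiples of $q^{Rr}$ by the largest-term and geometric-series estimates respectively, and (ii) identify the constant's shape as $W(q)\prod_i(q^{m_i}-1)$ by the factorization $S(q,R)\asymp W(q)\cdot\Sigma(R)$ and the product formula for $\Sigma(R)$. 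The input from the literature is just the length formula $l(\varpi^\mu)=\langle\rho,\mu\rangle$ already cited in Lemma~\ref{lemma:distance_is_length} and the standard description of $\widetilde W$ as a semidirect product; everything else is elementary estimation of a multi-index geometric sum.
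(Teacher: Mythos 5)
Your route is genuinely different from the paper's. The paper's proof is a one\nobreakdash-liner: it identifies $S(q,R)$ as the truncation at $q^R$ of the Poincar\'e series of $\widetilde W$ and invokes Bott's factorization $\widetilde W(q)=W(q)\prod_{i=1}^r(1-q^{m_i})^{-1}$ from Humphreys \S 8.9; the exponent $Rr$ then comes from the fact that there are $r$ geometric factors $(1-q^{m_i})^{-1}$, each of which, truncated at $q^R$, contributes $\asymp q^R/(q^{m_i}-1)$. You instead use the semidirect product decomposition $\widetilde W=X_*(T)\rtimes W$ and the translation length formula to reduce $S(q,R)$ to a lattice-point sum over the dominant cone. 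That is a legitimate alternative starting point (and it is the same toolkit the paper uses for Lemma~\ref{lemma:distance_is_length}), but as written it does not arrive at the stated asymptotics, and the gap sits exactly where you flag it.

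Concretely, three things go wrong. First, your constraint and your summand must use the same linear functional: if $l(\varpi^\mu w_0)=\langle 2\rho,\mu\rangle+O(1)$, then $l(w)\le R$ reads $\langle 2\rho,\mu\rangle\le R+O(1)$, not $\langle\rho,\mu\rangle\le R$, so the truncation level of your sum $\Sigma$ is $R$, not $2R$. Second, your evaluation of the truncated multi-geometric sum is incorrect: with truncation level $N$ one has $\sum_{\sum c_in_i\le N}q^{\sum c_in_i}=\sum_{m\le N}\#\{n:\textstyle\sum c_in_i=m\}\,q^m\asymp N^{r-1}q^N$, since the boundary layer $\{\sum c_in_i\in(N-C,N]\}$ contains $\asymp N^{r-1}$ lattice points; this polynomial factor is the dominant correction and cannot be dropped in an $\asymp$ statement, and in particular the sum is not a constant times $q^N$. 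Third, and most importantly, the closing appeal to ``reconciling normalizations'' cannot succeed: rescaling $R$ by a constant changes the coefficient $c$ in $q^{cR}$ by that constant, but no such rescaling turns a fixed $c$ (your computation gives $c=1$ or $2$) into the rank $r$. In the paper's argument the $r$ in $q^{Rr}$ arises from multiplying $r$ separately truncated factors of Bott's product --- i.e., from replacing the single constraint $l(w)\le R$ by $r$ independent constraints $k_im_i\le R$ --- and that ingredient is entirely absent from your argument. So the last step is not bookkeeping but a missing idea; a corrected version of your computation yields $S(q,R)\asymp R^{r-1}q^{R}$, which does not match the lemma as stated, and to recover the paper's claim you would have to import the Poincar\'e-series factorization and justify passing from the truncation of the product to the product of the truncations.
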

\begin{proof}
  The polynomial (in the variable $q$) $S(q, R)$ is the truncation of the Poincare series of $\widetilde W$ at $q^R$.
  This series is given by
  \begin{equation*}
    \widetilde W(q) = W(q)\prod_i \frac 1{1-q^{m_i}}
  \end{equation*}
  (see \cite[\S 8.9]{Hum}).
\end{proof}

\end{document}